\title{Two remarks on Narkiewicz's property (P)}
\date{\today}
\author{Lukas Pottmeyer}
\address{Fakult\"at f\"ur Mathematik, Universit\"at Duisburg-Essen, 45117 Essen}
\email{lukas.pottmeyer@unidue.de}
\DeclareMathOperator{\Gal}{Gal}
\DeclareMathOperator{\id}{id}
\DeclareMathOperator{\Per}{Per}
\newtheorem{theorem}{Theorem}[section]
\newtheorem{corollary}[theorem]{Corollary}
\newtheorem{construction}[theorem]{Construction}
\newtheorem{proposition}[theorem]{Proposition}
\newtheorem{lemma}[theorem]{Lemma}
\theoremstyle{definition}
\newtheorem*{definition}{Definition}
\theoremstyle{remark}
\newtheorem{example}[theorem]{Example}
\newtheorem{remark}[theorem]{Remark}
\newcommand{\Symm}[1]{\mathbb{Q}_{#1,\rm sym}}
\begin{document}

\begin{abstract}
Due to Narkiewicz a field $F$ has property (P) if for no polynomial $f\in F[x]$ of degree at least two there is an infinite $f$-invariant subset of $F$. We present a new example of an algebraic extension of $\mathbb{Q}$ satisfying (P). This is the first example in which we can find points of arbitrarily small positive Weil-height. Moreover, we study the possibility of property (P) for the field generated by all symmetric Galois extensions of $\mathbb{Q}$. In particular we prove that there are no infinite backward orbits of non linear polynomials in this field. 
\end{abstract}

\subjclass[2010]{37P05, 37P35, 11R04}
\keywords{polynomial dynamics, periodic points, property (P), heights}
\maketitle

\section{Property (P)}

In this paper we deal with a definition due to Narkiewicz.

\begin{definition}
A field $F$ has property (P), if there is no infinite subset $X\subseteq F$, such that $f(X)=X$ for some polynomial $f\in F[x]$ with $\deg(f)\geq 2$.
\end{definition}

A similar definition, due to Liardet, replaces \emph{polynomial} by \emph{rational function} in the above definition. In this case the field is said to have property (R). It is not known whether property (P) is equivalent to property (R). This is one of several open questions on property (P) (see \cite{Na63}, \cite{Na71}, and \cite[Sections IX and X]{Na}). One of these questions asks for a constructive classification of all fields with property (P). With a view to the known examples, see Examples \ref{ex} and Corollary \ref{cor:weird} below, this seems to be the most difficult question on property (P).

This property is obviously closely tied to the theory of dynamical systems. Hence, we recall the basic definitions from this discipline. Let $f\in\overline{\mathbb{Q}}(x)$ be a rational function. With $f^n$ we denote the $n$th iterate of $f$, with the usual convention $f^0=x$. A point $\alpha \in \overline{\mathbb{Q}}$ is called a periodic point of $f$, if for some integer $n\geq 1$ we have $f^n(\alpha)=\alpha$. The smallest $n$ with this property is called the exact period of $\alpha$. The set of all periodic points of $f$ of exact period $n$ is denoted by $\Per_n(f)$ and we set $\Per(f)=\cup_{n\geq 1}\Per_n(f)$.

\begin{remark}\label{rmk:(P)}
It follows immediately that a field $F$ has property (P) if and only if every $f\in F[x]$, with $\deg(f)\geq 2$, satisfies:
\begin{itemize}
\item[(P1)] there is no infinite sequence $\alpha_0,\alpha_1,\ldots$ of pairwise distinct elements $\alpha_i \in F$, such that $f(\alpha_i)=\alpha_{i-1}$ for all $i\in\mathbb{N}$, and
\item[(P2)] there are only finitely many periodic points of $f$ in $F$.
\end{itemize} 
\end{remark}

In this paper we are only interested in fields with property (P) lying inside a fixed algebraic closure $\overline{\mathbb{Q}}$ of the rational numbers. In this case, height functions are an important tool for proving that a certain $F\subseteq \overline{\mathbb{Q}}$ has property (P). Denote by $h$ the (absolute logarithmic Weil-)height on $\overline{\mathbb{Q}}$. An algebraic extension $F$ of $\mathbb{Q}$ is said to have the Northcott property (N) if for every $T\in \mathbb{R}$ the set $\{\alpha \in F \vert h(\alpha)\leq T\}$ is finite. It is easy to see that property (N) implies property (P), we refer to \cite{CW} and \cite{DZ2} for a proof and additional results.
There are only few examples known of subfields of $\overline{\mathbb{Q}}$ with property (P).

\begin{example}\label{ex}
Let $K$ denote an arbitrary number field. The following fields satisfy property (P):
\begin{enumerate}[(I)]
\item $F=\cup_{i\in\mathbb{N}} K_i$, where $K=K_0\subseteq K_1 \subseteq K_2 \subseteq \ldots$ is a nested sequence of number fields, such that
\[
\inf_{K_i \subsetneq M \subseteq K_{i+1}} (N_{K_i /\mathbb{Q}}(D_{M/K_i}))^{\nicefrac{1}{[M:K_0][M:K_i]}} \longrightarrow \infty,
\]
where $D_{M/K_i}$ denotes the relative discriminant of the extension $M/K_i$ and $N_{K_i/\mathbb{Q}}$ is the norm associated to $K_i/\mathbb{Q}$ (see \cite{Wi});
\item $F\subseteq K(\mu)$, where $\mu$ is the set of roots of unity, and $L(\mu)\cap F$ and $L(\{\zeta + \zeta^{-1}\vert \zeta \in \mu\})\cap F$ are finite for all linear polynomials $L\in \overline{\mathbb{Q}}[x]$ (see \cite{DZ1});
\item $F/K$ Galois, such that infinitely many local degrees of $F$ are finite (see \cite{Po}).
\end{enumerate}
\end{example}

A nice example of fields from part (III) are the fields which are generated over $\mathbb{Q}$ by elements of bounded degree. See \cite{Po} for more information.

\begin{remark}
The fields from (I) satisfy property (N). Other fields with property (N) are constructed in \cite[Theorems 1 and 2]{BZ}. Note that the set of examples from \cite[Theorem 2]{BZ} is non-empty as shown in \cite{CF}, but they are already covered by the fields in (III).

Both classes (II) and (III) contain fields which do not satisfy property (N) (see \cite{DZ2} for the fields in (II), and \cite{CF} for the fields in (III)). However, all fields from Example \ref{ex} satisfy a gap principle for the height (also known as the Bogomolov property (B)). This is, for every $F$ from Example \ref{ex} there is a positive constant $c_F$ such that every $\alpha \in F$ satisfies either $h(\alpha)=0$ or $h(\alpha)\geq c_f$. This follows from \cite{BZ} for the fields in (III), from \cite{Wi} for the fields in (I), and from \cite{AZ} for the fields in (II). 
\end{remark}

\subsection{The construction of Kubota and Liardet}

To the best of my knowledge there is only one further class of examples of fields with property (P), due to Kubota and Liardet \cite{KL}:

\begin{construction}\label{const:KL}
Since $\overline{\mathbb{Q}}[x]$ is countable, we enumerate the polynomials of degree $\geq 2$ as $f_1,f_2,\ldots$ Moreover, we choose any infinite sequence $p_1,p_2,\ldots$ of distinct primes. Lastly, we set $E_n = \cup_{j\leq n} \Per(f_j)$. Now we construct a sequence
\[
K_0\subseteq K_1 \subseteq K_2 \subseteq \ldots
\] 
such that $K_0$ is an arbitrary number field and for each $n\geq 1$ we have $[K_n:K_{n-1}]=p_n$ and $K_n \cap E_n=K_{n-1}\cap E_n$. Then $F=\cup_{n\geq 1} K_n$ has property (P).
\end{construction}

Note that the condition $K_n \cup E_n=K_{n-1}\cup E_n$ leads to (P2) for $F$, and the condition $[K_n:K_{n-1}]=p_n$ leads to (P1) for $F$. Dvornicich and Zannier \cite[p. 535]{DZ1} noticed that this \emph{construction is rather indirect, and it seems difficult to detect other properties of the field so obtained}. In this note we will use a very similar construction which can produce explicit fields satisfying property (P) not contained in Example \ref{ex}. In these explicit fields we can detect other properties. In particular, we can produce a field which satisfies property (P), but - in contrast to all fields from Example \ref{ex} - does contain points of arbitrarily small positive height. The precise formulation of this result is as follows.

Let $K$ be a number field, $L/K$ a finite extension, and $v$ a non-archimedean valuation on $K$. If $w\mid v$ is an extension of $v$ to $L$, then we denote by $e_{w\mid v}$, resp. $f_{w \mid v}$, the ramification index, resp. the inertia degree, of $L_w / K_v$. If $F/K$ is any algebraic extension, then we say that $F$ has finite inertia degree over $v$, if there exists a constant $C_v$ such that for every finite subextension $F/L/K$ there is a $w\mid v$ on $L$ such that $f_{w\mid v}\leq C_v$. Equivalently, $F/K$ has finite inertia degree over $v$, if there is an extension $w$ of $v$ to $F$ such that the residue field of $F_w$ is finite.

\begin{proposition}\label{prop:P2}
Let $F \subseteq \overline{\mathbb{Q}}$ be a field. If for infinitely many prime numbers $p$ the extension $F/\mathbb{Q}$ has finite inertia degree over $p$, then $F$ satisfies (P2).
\end{proposition}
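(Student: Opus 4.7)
To establish (P2) for $F$, the plan is to exploit the infinitude of primes in the hypothesis by applying a local period-bounding result at \emph{two} different primes simultaneously. Let $f \in F[x]$ have degree $d \geq 2$, with coefficients in some number field $L \subseteq F$. All but finitely many rational primes are ``good'' for $f$: at every valuation above them in $L$, the coefficients of $f$ are integral and the leading coefficient is a unit. From the infinite family of primes at which $F/\mathbb{Q}$ has finite inertia degree I may therefore pick two distinct $p_1 \neq p_2$, together with extensions $w_1,w_2$ to $F$ whose residue fields $k_{w_1},k_{w_2}$ are finite of sizes $q_1,q_2$, such that $f$ has good reduction at both $w_i$.

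A standard ultrametric argument shows that every $\alpha \in \Per(f) \cap F$ is $w_i$-integral for $i=1,2$: if $|\alpha|_{w_i} > 1$, then good reduction gives $|f(\alpha)|_{w_i} = |\alpha|_{w_i}^d$, and iteration forces $|f^k(\alpha)|_{w_i} \to \infty$, contradicting periodicity. Hence the reduction $\bar\alpha \in k_{w_i}$ is a periodic point of $\bar f$ of some exact period $m_i \leq q_i$ dividing the exact period $n$ of $\alpha$.

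I now apply Pezda's theorem (with Zieve's refinement) in the completion of $L(\alpha)$ at $w_i|_{L(\alpha)}$, a finite extension of $\mathbb{Q}_{p_i}$ with residue field contained in $k_{w_i}$. It says that $n$ equals one of $m_i$, $m_i r_i$, or $m_i r_i p_i^{e_i}$ with $e_i \geq 1$, where $r_i$ is the order in $k_{w_i}^\times$ of the multiplier $(\bar f^{m_i})'(\bar\alpha)$; in particular $r_i \mid q_i-1$. Consequently the prime-to-$p_i$ part of $n$ divides $m_i r_i$ and is therefore at most $q_i(q_i-1)$, \emph{independently of the ramification} of $L(\alpha)/\mathbb{Q}_{p_i}$. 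Since $\gcd(p_1,p_2)=1$, the $p_1$-part of $n$ divides the prime-to-$p_2$ part, so $p_1^{v_{p_1}(n)} \leq q_2(q_2-1)$, and symmetrically $p_2^{v_{p_2}(n)} \leq q_1(q_1-1)$. Therefore
\[
n \;=\; p_1^{v_{p_1}(n)} \cdot \frac{n}{p_1^{v_{p_1}(n)}} \;\leq\; q_1(q_1-1)\,q_2(q_2-1) =: N,
\]
a bound uniform in $\alpha$. Thus $\Per(f) \cap F$ lies in the finite zero-set of $\prod_{k=1}^{N}(f^k(x)-x) \in F[x]$, which proves (P2).

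The principal obstacle is that Pezda/Zieve's description of $n$ \emph{does} allow a wildly-ramified factor $p_i^{e_i}$ whose size depends on the absolute ramification of $L(\alpha)_{w_i|_{L(\alpha)}}/\mathbb{Q}_{p_i}$, and in our setting this ramification is potentially unbounded. The decisive observation is that the prime-to-$p_i$ part of $n$ is nevertheless bounded purely in terms of $q_i$; the two-prime combination then eliminates each wildly-ramified factor and yields the required uniform bound on $n$.
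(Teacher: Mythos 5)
Your proof is correct and follows essentially the same route as the paper: choose two primes of finite inertia degree at which $f$ has good reduction, and combine the local period bounds to get a bound on $n$ that is uniform in $\alpha$. The only difference is that the paper cites the two-prime bound directly (Morton--Silverman, Corollary B, i.e.\ \cite[Corollary 2.26]{Si}, giving $n\leq(p^{2f_{v\mid p}}-1)(q^{2f_{w\mid q}}-1)$), whereas you re-derive it from Pezda's theorem --- correctly identifying that the wild factor $p_i^{e_i}$ at one prime is absorbed by the prime-to-$p_j$ bound at the other.
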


The proof, which follows quite immediately from a result of Morton and Silverman \cite[Corollary B]{MS} (see also \cite[Corollary 2]{Pe} for a related result), can be found in Section \ref{sec:prop}. 

Note that Proposition \ref{prop:P2} reproves (P2) for all fields in (III). In what follows $\sqrt[n]{a}$ denotes any fixed root of the polynomial $x^n-a$. The proof of the next corollary is the content of Section \ref{sec:cor}.

\begin{corollary}\label{cor:weird}
Let $(p_n)_{n\geq 1}$ be a sequence of pairwise distinct prime numbers. Define $\alpha_0=1$ and $\alpha_{n}=\sqrt[p_{n}]{p_{n}\alpha_{n-1}}$ for all $n\geq 1$. Then
\[
F=\mathbb{Q}(\alpha_1,\alpha_2,\ldots)
\] 
has property (P). Moreover, $h(\alpha_n)\rightarrow 0$ as $n$ tends to infinity.
\end{corollary}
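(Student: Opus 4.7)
My plan is to handle the three claims---$h(\alpha_n)\to 0$, (P2), and (P1)---in roughly that order, writing $K_n := \mathbb{Q}(\alpha_1,\ldots,\alpha_n)$ throughout. The height claim follows by a direct recursion: applying $h$ to $\alpha_n^{p_n} = p_n\alpha_{n-1}$ and using the standard height inequalities gives $p_n h(\alpha_n) \le \log p_n + h(\alpha_{n-1})$, which unrolls to $h(\alpha_n) \le \sum_{k=1}^n (\log p_k)/(p_k p_{k+1}\cdots p_n)$. Since the $p_k$ are pairwise distinct primes we have $p_k \to \infty$, so $\log p_k / p_k \to 0$; combined with the geometric factor $1/(p_{k+1}\cdots p_n) \le 1/2^{n-k}$, a routine split-the-sum argument shows the right-hand side tends to zero.

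For (P2) I would invoke Proposition \ref{prop:P2} by showing that $F/\mathbb{Q}$ has finite inertia degree over each prime $p_k$. The only primes that can ramify in $K_{k-1}/\mathbb{Q}$ lie among $p_1,\ldots,p_{k-1}$---these are the only primes appearing in the discriminants of the successive defining relations $x^{p_j} - p_j\alpha_{j-1}$ for $j<k$---so $p_k$ is unramified in $K_{k-1}$. Pick any prime $\mathfrak{q}_{k-1}$ of $K_{k-1}$ above $p_k$; its residue degree divides $[K_{k-1}:\mathbb{Q}] = p_1\cdots p_{k-1}$. I then claim that at each subsequent step $K_n \subset K_{n+1}$ with $n\ge k-1$, the polynomial $x^{p_{n+1}} - p_{n+1}\alpha_n$ is Eisenstein at the chosen extended prime $\mathfrak{q}_n$: this is a short inductive bookkeeping, with $v_{\mathfrak{q}_{k-1}}(p_k\alpha_{k-1}) = 1$ as the base case and $v_{\mathfrak{q}_n}(\alpha_n) = 1$ as the invariant carried from one Eisenstein step to the next. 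Total ramification thus propagates up the tower, the residue degree stays constant, and the coherent prime chain $(\mathfrak{q}_n)_n$ has residue degree bounded by $p_1\cdots p_{k-1}$; so $F$ has finite inertia degree at $p_k$, and distinctness of the $p_k$ provides the infinitely many primes required by Proposition \ref{prop:P2}.

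For (P1) I first establish $[K_n:K_{n-1}] = p_n$ via Capelli's criterion applied to $x^{p_n} - p_n\alpha_{n-1}$: unramifiedness of $p_n$ in $K_{n-1}$ together with $v(\alpha_{n-1}) = 0$ at any prime over $p_n$ forces $v(p_n\alpha_{n-1}) = 1$, which is not divisible by $p_n$, so $p_n\alpha_{n-1}$ is not a $p_n$-th power in $K_{n-1}$. Consequently every finite subextension of $F/\mathbb{Q}$ has squarefree degree. Suppose now that $(\beta_i)$ is an infinite backward orbit of some $f \in F[x]$ with $\deg f = D \ge 2$; fix $m$ so that $f, \beta_0 \in K_m$ and let $d_i := [K_m(\beta_i):K_m]$. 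The sequence $(d_i)$ is nondecreasing and squarefree at each step, with each ratio $d_{i+1}/d_i$ dividing $D$ (since $\beta_{i+1}$ is a root of $f(x) - \beta_i$). A short induction then gives $d_i \mid \mathrm{rad}(D)$, so $d_i$ stabilizes and all $\beta_i$ lie in a common number field $L \subseteq F$. The standard estimate $h(f(\alpha)) \ge D\cdot h(\alpha) - O_f(1)$ bounds the heights along the backward orbit, and property (N) for $L$ delivers the needed contradiction.

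The main obstacle I expect is the valuation bookkeeping in (P2): one must check inductively that the successive $\alpha_n$ serves as a local uniformizer at $\mathfrak{q}_n$, so that the next defining polynomial is genuinely Eisenstein rather than merely irreducible. Once that invariant is secured, both halves of property (P) fall out of a clean combination of local Capelli/Eisenstein inputs and the squarefree-degree trick that also drives Construction \ref{const:KL}.
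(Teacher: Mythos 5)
Your overall strategy is sound and differs from the paper's in two places worth noting. For total ramification the paper works globally: from $\alpha_n^{p_1\cdots p_n}=p_1\,p_2^{p_1}\cdots p_n^{p_1\cdots p_{n-1}}$ it reads off the exponent of a prime above $p_i$ in the ideal $p_n\alpha_{n-1}\mathcal{O}_{K_n}$ and applies the Mann--V\'elez criterion (Lemma \ref{lem:radicalramification}), then routes everything through Corollary \ref{cor:tower}. Your local Eisenstein bookkeeping along a single coherent chain $(\mathfrak{q}_n)$ above $p_k$ achieves the same end and feeds directly into Proposition \ref{prop:P2}; it checks out, including the base case $v_{\mathfrak{q}_{k-1}}(p_k\alpha_{k-1})=1$ (using that $p_k$ is unramified in $K_{k-1}$ and that $\alpha_{k-1}$ is a unit at $\mathfrak{q}_{k-1}$) and the propagated invariant $v_{\mathfrak{q}_n}(\alpha_n)=1$. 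The height claim is the same recursion as in the paper, just unrolled into a sum rather than closed by the observation that $h(\alpha_n)<1$ forces $h(\alpha_{n+1})\le(\log p_{n+1}+1)/p_{n+1}\to 0$.

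The one step that does not survive scrutiny is in your (P1). The ratio $d_{i+1}/d_i=[K_m(\beta_{i+1}):K_m(\beta_i)]$ is the degree of the minimal polynomial of $\beta_{i+1}$ over $K_m(\beta_i)$; that polynomial divides $f(x)-\beta_i$, so its degree is at most $D$, but it need not \emph{divide} $D$, and consequently ``$d_i\mid\mathrm{rad}(D)$'' is unsupported. The repair is short and is in effect what the paper (following Kubota--Liardet) does: since the $p_j$ are pairwise distinct, only finitely many are $\le D$, so choose $m$ large enough that every $p_j$ with $j>m$ exceeds $D$. Any nontrivial jump $d_{i+1}/d_i>1$ is then a product of distinct primes from $\{p_{m+1},p_{m+2},\ldots\}$ (your squarefree-degree observation), hence exceeds $D$, contradicting $d_{i+1}/d_i\le D$; so all $\beta_i$ lie in $K_m$ and Northcott for the number field $K_m$ finishes. (Alternatively, keep your arbitrary $m$ and note that each nontrivial jump must consume at least one new prime $p_j\le D$ with $j>m$, of which there are only finitely many, so the $d_i$ stabilize.) With that fix the proposal is a complete proof.
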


Note that the degree of $\mathbb{Q}(\alpha_1,\ldots,\alpha_n)/\mathbb{Q}(\alpha_1,\ldots,\alpha_{n-1})$ is equal to the prime number $p_n$. As in Construction \ref{const:KL}, this will guarantee that the field $F$ satisfies (P1).

\subsection{The field $\mathbb{Q}^{\rm sym}$}
Another hypothetical possibility to prove (P2) for a field $F$ lies in the structure of the Galois groups of the fields generated by periodic points. Let $K$ be a number field, $f\in K[x]$, and $\alpha \in \Per_n(f)$. Moreover, we denote by $K_{\alpha}$ the Galois closure of $K(\alpha)/K$.

We introduce the following hypothesis on $f$:
\begin{equation}\label{eq:hyp}
\begin{aligned}
 &\text{\it for all but finitely many } \alpha \in \Per(f) \text{ \it the group } \Gal(K_{\alpha}/K)  \\ & \text{\it has a normal abelian subgroup of exponent } \geq 25.
\end{aligned}
\end{equation}

Morton and Patel \cite[Theorem 7.4]{MP} proved that for a generic polynomial $f\in\mathbb{Q}[x]$ of degree $\geq 2$ any $\alpha \in \Per(f)$ is a Galois conjugate of $f(\alpha)$, and that $\Gal(K_{\alpha}/K)$ has an abelian normal subgroup of exponent $n$. In particular, it should be the regular case that a polynomial satisfies hypothesis \eqref{eq:hyp}. We will discuss this in more detail in Section \ref{sec:periodicsym}. 
Let $\mathbb{Q}^{\rm sym}$ be the compositum of all finite Galois extensions of $\mathbb{Q}$ with Galois group isomorphic to some symmetric group $S_n$. We will give some support for the possibility that $\mathbb{Q}^{\rm sym}$ satisfies property (P).

\begin{theorem}\label{thm:Qsym}
The field $\mathbb{Q}^{\rm sym}$ satisfies (P1). Moreover, every polynomial $f\in\mathbb{Q}^{\rm sym}$ that satisfies hypothesis \eqref{eq:hyp} for some number field $K$ has at most finitely many periodic points in $\mathbb{Q}^{\rm sym}$.
\end{theorem}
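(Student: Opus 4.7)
Both parts rest on the structural observation that every finite Galois extension $L/\mathbb{Q}$ contained in $\mathbb{Q}^{\rm sym}$ has maximal abelian subextension $L^{\rm ab}/\mathbb{Q}$ which is multi-quadratic; equivalently, $\Gal(L/\mathbb{Q})^{\rm ab}$ has exponent~$2$. Indeed, $L$ lies in a compositum $L_1\cdots L_k$ of Galois $S_{n_i}$-extensions of~$\mathbb{Q}$, and the maximal abelian subextension of such a compositum equals the compositum of the $L_i^{\rm ab}$, each of which is the quadratic field $\mathbb{Q}\bigl(\sqrt{\mathrm{disc}(L_i)}\bigr)$.

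For (P1), I would assume for contradiction an infinite backward orbit $(\alpha_n)_{n\ge 0}$ of pairwise distinct elements in $\mathbb{Q}^{\rm sym}$ under some $f\in\mathbb{Q}^{\rm sym}[x]$ of degree $d\ge 2$. Choosing a number field $K\subseteq\mathbb{Q}^{\rm sym}$ that contains $\alpha_0$ and the coefficients of~$f$, the Call--Silverman canonical height gives $\hat h_f(\alpha_n)=\hat h_f(\alpha_0)/d^n$, and since $h-\hat h_f$ is bounded this forces $h(\alpha_n)$ to remain uniformly bounded. Northcott's theorem applied in finite extensions of~$\mathbb{Q}$ then yields $[K(\alpha_n):K]\to\infty$. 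I would then study the Galois closure $M_n$ of $K(\alpha_n)/\mathbb{Q}$, which lies in $\mathbb{Q}^{\rm sym}$, and argue that the arboreal structure of iterated $f$-preimages eventually forces an abelian subextension of $M_n/\mathbb{Q}$ of exponent $>2$---such as a cyclic extension coming from the $d^n$-th roots of unity that must appear in the splitting field of $f^n(x)-\alpha_0$ when $d\ge 3$, or a cyclic $2$-extension of degree $\ge 4$ of cyclotomic type when $d=2$---contradicting the structural observation.

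For the second claim, suppose $f\in\mathbb{Q}^{\rm sym}[x]$ satisfies hypothesis~\eqref{eq:hyp} for some number field~$K$ but has infinitely many periodic points in $\mathbb{Q}^{\rm sym}$. Since $\#\Per_n(f)\le d^n$ for each~$n$, the periods are unbounded, and we can choose $\alpha\in\Per(f)\cap\mathbb{Q}^{\rm sym}$ for which $\Gal(K_\alpha/K)$ admits a normal abelian subgroup~$A$ of exponent $\ge 25$. The Chinese remainder theorem applied to~$A$ produces an element $a\in A$ of order $\ge 25$, and thus $K_\alpha/K_\alpha^{\langle a\rangle}$ is a cyclic extension of degree $\ge 25$ contained in~$\mathbb{Q}^{\rm sym}$. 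Combining this with the dynamical origin of~$A$---namely, Morton--Patel's description of $\Gal(K_\alpha/K)$ for periodic~$\alpha$ as containing a $\mathbb{Z}/n$-subgroup that realizes the shift action of~$f$ on the orbit---and taking the Galois closure over~$\mathbb{Q}$ of the intermediate field $K_\alpha^{\langle a\rangle}$, one obtains a non-multi-quadratic abelian subextension of~$\mathbb{Q}$ inside~$\mathbb{Q}^{\rm sym}$, contradicting the structural observation.

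The most delicate point is the case $d=2$ in~(P1), where iterated preimage Galois groups are $2$-groups whose multi-quadratic abelianization does not directly contradict the structural observation. Here a refined Goursat-type analysis of subgroups of $\prod S_{n_i}$ with surjective projections would be needed to rule out specific non-abelian $2$-groups (such as $D_{2^k}$ for $k\ge 2$) from appearing as Galois groups over~$\mathbb{Q}$ of extensions inside $\mathbb{Q}^{\rm sym}$.
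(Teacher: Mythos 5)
Your proposal does not close either half of the theorem; both arguments have genuine gaps, and the key idea of the paper's proof -- working over the base field $\Symm{k}K$ rather than over $\mathbb{Q}$ -- is missing.

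For (P1), the contradiction you aim for cannot be extracted from your ``structural observation'' about abelian subextensions of $\mathbb{Q}^{\rm sym}/\mathbb{Q}$. The splitting field of $f^n(x)-\alpha_0$ contains the $d^n$-th roots of unity only for very special maps such as $f(x)=x^d$; for a general $f$ the preimages of $\alpha_0$ do not differ by roots of unity and no cyclotomic subfield need appear, so the claimed exponent-$>2$ abelian subextension of $M_n/\mathbb{Q}$ need not exist. Worse, $\mathbb{Q}^{\rm sym}$ contains \emph{every} quadratic extension and every $S_d$-extension of $\mathbb{Q}$, so over $\mathbb{Q}$ the Galois groups that actually occur in preimage towers ($\mathbb{Z}/2$, $S_d$, their fibre products) are perfectly compatible with your observation -- which is why you are forced to leave $d=2$ open. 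The paper's resolution is to pass to $F_0=\Symm{k}K$ with $k=\max\{\deg f,4\}$: by Proposition \ref{prop:several generators}, every nontrivial finite Galois subextension of $\mathbb{Q}^{\rm sym}$ over $F_0$ has group $A_{n_1}\times\cdots\times A_{n_r}$ with all $n_i>k$, hence cannot embed into $S_{\deg f}$; since the minimal polynomial of $\alpha_{n+1}$ over $F_0$ divides $f(x)-\alpha_n$, the first preimage leaving $F_0$ (which exists because $F_0$ already has property (P) by Example \ref{ex}(III)) must leave $\mathbb{Q}^{\rm sym}$. No heights are needed beyond this.

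For the second claim, the step ``$K_\alpha/K_\alpha^{\langle a\rangle}$ is cyclic of degree $\ge 25$ inside $\mathbb{Q}^{\rm sym}$, hence contradiction'' is not valid: $\mathbb{Q}^{\rm sym}$ contains many large cyclic \emph{relative} extensions of number fields (inside any $S_n$-extension $L/\mathbb{Q}$ the extension $L/L^{\langle\sigma\rangle}$ for an $n$-cycle $\sigma$ is cyclic of degree $n$), and taking the Galois closure over $\mathbb{Q}$ of $K_\alpha^{\langle a\rangle}$ does not produce an abelian extension of $\mathbb{Q}$. Your structural observation only constrains abelian extensions of $\mathbb{Q}$ itself. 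What the paper uses is the \emph{normality} of the abelian subgroup $N$: then $K_\alpha/K_\alpha^{N}$ is abelian with Galois base, Corollary \ref{cor:abelian in Symm4} forces $K_\alpha\subseteq\Symm{4}K_\alpha^{N}$, and $N$ embeds into $\Gal(\Symm{4}K_\alpha^N/K_\alpha^N)$, whose exponent is at most $4!=24<25$. Finally, note that your structural observation itself (that the maximal abelian subextension of a compositum of $S_{n_i}$-extensions is the compositum of the quadratic subfields) is asserted rather than proved; justifying it requires essentially the Goursat-type analysis of Lemma \ref{lem:singel generator} and Proposition \ref{prop:several generators}, which is the technical core of the paper.
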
  

In \cite{Fe} it was shown that there exists a pseudo algebraically closed (PAC) field which satisfies property (N), and hence (P). Since $\mathbb{Q}^{\rm sym}$ is PAC (see \cite[Theorem 18.10.4]{FJ}), Theorem \ref{thm:Qsym} gives a potential explicit example of a PAC field satisfying (P).

Section \ref{sec:P1} contains the proof of (P1) for $\mathbb{Q}^{\rm sym}$. The remaining statement of Theorem \ref{thm:Qsym} is proven in Section \ref{sec:periodicsym}. We colse this introduction by noting that property (P) can actually be defined on any algebraic variety, when we replace polynomials by endomorphisms and the field $F$ by $F$-rational points. In this setting, property (R) deals with endomorphisms of $\mathbb{P}^1$. Narkiewicz also introduced the case (in modern language) of endomorphisms of $\mathbb{P}^N$ for arbitrary $N$ and called this property (SP). The case of property (P) for elliptic curves has recently been studied in \cite{MSha}.

\section{Ramification and proof of Proposition \ref{prop:P2}}\label{sec:prop}

Let $K$ be a number field with ring of integers $\mathcal{O}_K$. We use the usual one-to-one correspondence between non-zero prime ideals in $\mathcal{O}_K$ and non-archimedean valuations on $K$. Let $v$ be the valuation corresponding to the prime ideal $\mathfrak{P}$ in $\mathcal{O}_K$, and let $p$ be the rational prime with $v\mid p$. The norm of $\mathfrak{P}$ is given by
\begin{equation}\label{eq:norm}
N(\mathfrak{P})=\vert \nicefrac{\mathcal{O}_K}{\mathfrak{P}}\vert =  p^{f_{v\mid p}}
\end{equation}
A polynomial $f(x)=a_dx^d + a_{d-1}x^{d-1} + \ldots +a_0\in K[x]$ has good reduction at $v$ if $v(a_i) \leq 1$ for all $i\in\{0,\ldots,d-1\}$, and $v(a_d)=1$. For the necessary facts from valuation and ramification theory, we refer to the first three chapters of \cite{Ne} (or the English translation thereof), and for information on reduction of polynomials and rational maps, we refer to \cite[Section 2.5]{Si}. 

We need the following obvious facts:
\begin{itemize}
\item A polynomial in $K[x]$ has good reduction at all but finitely many valuations of $K$. 
\item  Let $L/K$ be a finite extension with an extension $w\mid v$ of non-archimedean valuations. If $f\in K[x]$ has good reduction at $v$, then $f$ considered as a polynomial in $L[x]$ has good reduction at $w$.
\end{itemize}

Proposition \ref{prop:P2} will follow immediately from the following result (see \cite[Corollary 2.26]{Si}), where we have applied Equation \eqref{eq:norm}.

\begin{lemma}\label{lem:nice}
Let $K$ be a number field and let $v,w$ be non-archimedean valuations on $K$. Assume further that $v \mid p$ and $w\mid q$ for distinct rational primes $p$ and $q$. If $f \in K[x]$ has good reduction at $v$ and $w$, and if $\alpha \in \Per_n(f)\cap K$, then $n \leq (p^{2f_{v\mid p}}-1)(q^{2f_{w\mid p}}-1)$.
\end{lemma}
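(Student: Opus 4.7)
My plan is to derive the bound from the Morton--Silverman analysis of periods under good reduction, applied at the two primes $v$ and $w$, and then to combine the two decompositions using $p\neq q$.

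\emph{Reduction and local dynamics at $v$.} Good reduction at $v$ ensures that the reduction $\widetilde{f}\in\mathbb{F}_{p^{f_{v\mid p}}}[x]$ has the same degree as $f$ and that reduction commutes with iteration. Consequently $\widetilde{\alpha}$ is periodic under $\widetilde{f}$ of some exact period $m_v\mid n$, and viewing $\alpha$ inside $\mathbb{P}^1$ gives $m_v\leq p^{f_{v\mid p}}+1$. Replacing $f$ by $g=f^{m_v}$, I would expand $g$ in a $v$-adic uniformiser about $\alpha$ and iterate $v$-adically, isolating a ``tame'' multiplicative cycle and a ``wild'' $p$-power contribution. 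This is the content of \cite[Thm.\ 2.21]{Si}; it produces a factorisation
\[
n = m_v\cdot r_v\cdot p^{e_v},
\]
where $r_v$ is the multiplicative order of the multiplier $\widetilde{g}'(\widetilde{\alpha})\in\mathbb{F}_{p^{f_{v\mid p}}}^{\ast}$, so $r_v\mid p^{f_{v\mid p}}-1$. Together with the bound on $m_v$ this gives
\[
m_v r_v \leq (p^{f_{v\mid p}}+1)(p^{f_{v\mid p}}-1) = p^{2f_{v\mid p}}-1.
\]

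\emph{Symmetric step at $w$ and assembly.} The identical analysis at $w$ yields $n=m_w r_w\,q^{e_w}$ with $m_w r_w\leq q^{2f_{w\mid q}}-1$ and $q^{e_w}$ a pure $q$-power. Because $p\neq q$, the pure $p$-power $p^{e_v}$ from the $v$-decomposition is coprime to $q^{e_w}$, and therefore must divide $m_w r_w$; in particular $p^{e_v}\leq q^{2f_{w\mid q}}-1$. Multiplying,
\[
n = m_v r_v\cdot p^{e_v}\ \leq\ (p^{2f_{v\mid p}}-1)(q^{2f_{w\mid q}}-1),
\]
which is the desired bound.

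The main obstacle is the local-dynamics step: proving that the exact period beyond the reduced period $m_v$ decomposes cleanly as a multiplier order times a $p$-power. This requires a careful $v$-adic convergence argument on the formal iterates of $g$ near $\alpha$, using Hensel-type lifting to identify the tame and wild contributions; once this factorisation is in hand, the rest of the proof is the short combinatorial assembly given above.
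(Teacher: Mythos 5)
Your argument is correct and reconstructs, from Silverman's Theorem~2.21, exactly the bound that the paper obtains by directly citing \cite[Corollary~2.26]{Si} (translated via the norm identity $N(\mathfrak{P})=p^{f_{v\mid p}}$), so the approach is essentially the same as the paper's. The only cosmetic point is that the trichotomy in Theorem~2.21 gives $n=m$, $n=mr$, or $n=mrp^e$, so the clean factorisation should read $n=m_v r_v' p^{e_v}$ with $r_v'\mid r_v$ and $e_v\ge 0$ (and similarly at $w$); this does not affect the coprimality step or the final bound.
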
 

\begin{proof}[Proof of Proposition \ref{prop:P2}]
Let $F$ be an algebraic extension of $\mathbb{Q}$ with finite inertia degree over infinitely many primes. Let $f\in F[x]$ and $\alpha \in \Per_n(f)\cap F$ be arbitrary. There is some number field $K\subseteq F$ with $f \in K[x]$ and $\alpha \in K$. Moreover, there are necessarily two distinct primes $p$ and $q$, such that 
\begin{itemize}
\item $f$ has good reduction at all valuations on $K$ lying above $p$ and $q$, and
\item $F$ has finite inertia degree over $p$ and $q$.
\end{itemize}
Now, we fix $v\mid p$ and $w\mid q$ in $K$ such that $f_{v\mid p}$ and $f_{w\mid q}$ are bounded from above by constants $C_p$ and $C_q$ only depending on $p$, $q$ and $F$. It follows from Lemma \ref{lem:nice} that $n \leq (p^{2C_p}-1)(q^{2C_q}-1)$. Hence, we have bounded the possible period size of an element from $F\cap \Per(f)$. In particular, there are at most finitely many periodic points for $f$ in $F$. This proves Proposition \ref{prop:P2}.
\end{proof}

The next result can be seen as a slightly more effective version of Construction \ref{const:KL}.

\begin{corollary}\label{cor:tower}
Let $K_0$ be a number field and $(p_n)_{n\geq 1}$ a sequence of distinct prime numbers. Let $K_0\subseteq K_1 \subseteq K_2 \subseteq \ldots$ be a nested sequence of number fields, 
such that $K_n/K_{n-1}$ is totally ramified at all valuations lying above $p_1,\ldots,p_n$. Then $F=\cup_{n\geq 0}K_n$ satisfies (P2). If in addition $[K_n:K_{n-1}]=p_n$ for all $n \geq 1$, then $F$ satisfies property (P).
\end{corollary}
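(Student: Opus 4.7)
The plan is to derive (P2) directly from Proposition \ref{prop:P2}, and to prove (P1) under the additional prime-degree hypothesis by a tower argument exploiting the squarefreeness of $[K_n:K_0]$.

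For (P2), I would verify that $F/\mathbb{Q}$ has finite inertia degree over every $p_j$. Fix $j$ and any valuation $v_j$ on $K_j$ lying above $p_j$. By hypothesis, for every $n \geq j+1$ the extension $K_n/K_{n-1}$ is totally ramified at each valuation above $p_j$, so $v_j$ extends uniquely at every step with inertia degree $1$ and the residue field is constant along the chain $K_j \subseteq K_{j+1} \subseteq \cdots$. Taking the direct limit yields an extension $w$ of $p_j$ to $F$ whose residue field coincides with the finite residue field of $K_{j,v_j}$. Hence $F/\mathbb{Q}$ has finite inertia degree over infinitely many primes, and Proposition \ref{prop:P2} gives (P2).

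For (P1), assume for contradiction that some $f \in F[x]$ of degree $d \geq 2$ admits an infinite sequence $\alpha_0,\alpha_1,\ldots$ of pairwise distinct elements of $F$ with $f(\alpha_i)=\alpha_{i-1}$. Choose $m_0$ with $f \in K_{m_0}[x]$ and $\alpha_0 \in K_{m_0}$, set $L_0 = K_{m_0}$, and let $L_i = L_0(\alpha_1,\ldots,\alpha_i)$. Since $\alpha_i$ is a root of $f(x)-\alpha_{i-1} \in L_{i-1}[x]$, we have $[L_i:L_{i-1}] \leq d$. On the other hand $L_i \subseteq K_{m_i}$ for some $m_i$, and the prime-degree hypothesis forces $[K_{m_i}:K_0] = p_1\cdots p_{m_i}$ to be squarefree; hence $[L_i:K_0]$ is itself squarefree, with prime factors drawn from the sequence $(p_n)$. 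The ratio $[L_i:K_0]/[L_{i-1}:K_0] = [L_i:L_{i-1}]$ thus has all its prime divisors in the \emph{finite} set $\{p_n : p_n \leq d\}$.

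Consequently the tower $L_0 \subseteq L_1 \subseteq \cdots$ can strictly grow only finitely often and stabilizes at some number field $L$. All but finitely many $\alpha_i$ then lie in $L$, which — being a number field — has the Northcott property and therefore satisfies (P1); this contradicts the existence of infinitely many distinct $\alpha_i$ in $L$. The main hurdle is recognizing the right quantitative consequence of the prime-degree hypothesis, namely that $[L_i:K_0]$ must divide the squarefree integer $p_1\cdots p_{m_i}$; once this is in place, the bound $[L_i:L_{i-1}] \leq d$ restricts the admissible primes to a finite set and the argument closes.
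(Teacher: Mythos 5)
Your proposal is correct and takes essentially the same approach as the paper: (P2) follows from Proposition \ref{prop:P2} after noting that total ramification freezes the inertia degree along the tower, and (P1) follows via the Kubota--Liardet observation that the squarefree degrees $[K_m:K_0]$, together with the bound $[L_i:L_{i-1}]\leq d$, confine a preimage tower to the finitely many primes $p_n\leq d$. Your ``stabilizing tower'' formulation is a mild reorganization of the paper's ``choose $n$ so that $f$ and the starting point lie in $K_n$ and every later $p_j$ exceeds $d$'' argument, but the substance is identical.
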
 

\begin{proof}
Let $n\geq 1$ be arbitrary, and let $v_1^{n-1},\ldots,v_k^{n-1}$ be the extensions of $p_n$ to $K_{n-1}$. That the extension $K_n/K_{n-1}$ is totally ramified above all $v_1^{n-1},\ldots,v_k^{n-1}$ implies that for all $v_i^{n-1}$, $i\in \{1,\ldots,k\}$, there is precisely one extension $v_i^{n}$ to $K_n$, and this satisfies $f_{v_i^{n}\mid v_i^{n-1}}=1$. Inductively it follows that in any $K_N$ with $N\geq n$ the extensions of $p_n$ are given by some $v_1^{N},\ldots,v_k^{N}$, with $f_{v_i^{N}\mid v_i^{N-1}}=1$ for all $i\in\{1,\ldots,k\}$. Since the inertia degree is multiplicative, we have $f_{v_i^{N}\mid p_n}=f_{v_i^{n-1}\mid p_n}$. In particular, $F=\cup_{n\geq 0}K_n$ has finite inertia degree above all primes $p_1,p_2,\ldots$. Hence, by Proposition \ref{prop:P2} the field $F$ satisfies (P2).

Now we assume that we have $[K_n:K_{n-1}]=p_n$ for all $n \geq 1$. Then (P1) for $F$ follows precisely as in the work of Kubota and Liardet \cite{KL}. For completeness we recall the simple argument here. The assumption on the degree implies that there is no proper intermediate field in the extension $K_{n}/K_{n-1}$. Equivalently, the field $K_{n}$ is the smallest proper field extension of $K_{n-1}$ lying in $F$. 
Let $f\in F[x]$ be of degree $d\geq 2$, and let $\beta_0 \in F$ be arbitrary. Fix any integer $n$ such that $\beta_0 \in K_n$, $f\in K_n[x]$, and such that $p_n >d$. Construct (if possible) pairwise distinct elements $\beta_1,\ldots,\beta_k\in K_n$ such that $f(\beta_i)=\beta_{i-1}$ for all $i\in\{1,\ldots,k\}$. Since any number field satisfies property (P) -- and hence (P1) -- such a sequence must necessarily be finite. Assume that $k$ is maximal, and that there is some $\beta_{k+1} \in F$, with $f(\beta_{k+1})=\beta_k$. Then $1\neq [K_n(\beta_{k+1}):K_n]\leq d < p_n$, which gives a contradiction. Hence, $F$ satisfies (P1), which implies that $F$ satisfies property (P).
\end{proof}

\section{Proof of Corollary \ref{cor:weird}}\label{sec:cor}

Let $(p_n)_{n\geq 1}$ be a sequence of pairwise distinct primes. We define
\[
\alpha_0=1 \quad \text{ and } \quad \alpha_n = \sqrt[p_n]{p_n\alpha_{n-1}} \text{ for } n\geq 1.
\]
Moreover, we set
\[
F=\mathbb{Q}(\alpha_1,\alpha_2,\ldots).
\]
We consider $F$ as the union of number fields. Define $K_0=\mathbb{Q}$ and $K_n=K_{n-1}(\alpha_n)$ for all $n\geq 1$, then $F=\cup_{n\geq 0}K_n$. We have
\begin{equation}\label{eq:alphpower}
\alpha_n^{p_1\cdots p_n}=(p_1)\cdot (p_2^{p_1})\cdots (p_n^{p_1\cdots p_{n-1}}).
\end{equation}
Hence, $\alpha_n$ is the root of the $p_1$-Eisenstein polynomial $x^{p_1\cdots p_n}-\alpha_n^{p_1\cdots p_n} \in \mathbb{Z}[x]$. This implies that $[K_n:\mathbb{Q}]=p_1\cdots p_n$, which in turn yields 
\begin{equation}\label{eq:primedegree}
[K_n:K_{n-1}]=p_n \qquad \text{ for all } n\geq 1.
\end{equation}
Note that all primes $p_k$, with $k> n$, are unramified in $K_n$. We need one further result on ramification in radical extensions.

\begin{lemma}\label{lem:radicalramification}
Let $K$ be a number field, $\alpha\in K$, and $p$ a prime number. Assume that the fractional ideal $\alpha \mathcal{O}_K$ decomposes as $\mathfrak{p}_1^{a_1}\cdots \mathfrak{p}_r^{a_r}$ for certain prime ideals $\mathfrak{p}_1,\ldots,\mathfrak{p}_r$ in the ring of integers $\mathcal{O}_K$. If $p\nmid a_i$, then $\mathfrak{p}_i$ is totally ramified in the extension $K(\sqrt[p]{\alpha})/K$.
\end{lemma}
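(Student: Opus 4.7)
The plan is to work locally at $\mathfrak{p}_i$ and apply the fundamental identity for ramification in $L/K$, where $L=K(\beta)$ with $\beta=\sqrt[p]{\alpha}$. Let $v=v_i$ denote the $\mathfrak{p}_i$-adic valuation on $K$, normalized so that $v(K^\ast)=\mathbb{Z}$. By hypothesis $v(\alpha)=a_i$ with $p\nmid a_i$; in particular $v(\alpha)\notin p\mathbb{Z}$, so $\alpha$ is not a $p$-th power in $K$ (any $p$-th power $\gamma^p$ would satisfy $v(\gamma^p)=p\cdot v(\gamma)\in p\mathbb{Z}$). By the classical irreducibility criterion for pure equations of prime degree, $x^p-\alpha$ is then irreducible in $K[x]$, so $[L:K]=p$.

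Next I would analyze an arbitrary extension $w$ of $v$ to $L$, normalized so that $w(L^\ast)=\mathbb{Z}$, so that $w|_K=e_{w\mid v}\cdot v$. The defining relation $\beta^p=\alpha$ yields
\[
p\cdot w(\beta)=w(\alpha)=e_{w\mid v}\cdot a_i.
\]
Since $w(\beta)\in\mathbb{Z}$ and $p\nmid a_i$, this forces $p\mid e_{w\mid v}$. Combined with the obvious bound $e_{w\mid v}\leq[L:K]=p$, we conclude $e_{w\mid v}=p$ for every $w\mid v$.

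Finally, the fundamental identity $\sum_{w\mid v}e_{w\mid v}f_{w\mid v}=[L:K]=p$, together with $e_{w\mid v}=p$ for each such $w$, leaves room for exactly one prime above $v$, and forces $f_{w\mid v}=1$. In other words, $\mathfrak{p}_i$ has a unique extension to $L$ with ramification index equal to $[L:K]$, which is precisely the statement that $\mathfrak{p}_i$ is totally ramified in $L/K$. The only slightly delicate step is the appeal to the irreducibility of $x^p-\alpha$; it can be bypassed by noting that $e_{w\mid v}f_{w\mid v}\geq p$ for some $w\mid v$ already gives $[L:K]\geq p$, which combined with the trivial upper bound $[L:K]\leq p$ yields $[L:K]=p$ and the same conclusion.
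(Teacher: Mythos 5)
Your proof is correct and complete. Note that the paper does not actually supply an argument here: it simply refers to the short proof in Mann--V\'elez \cite{MV}, and the valuation-theoretic argument you give (reading off $p\mid e_{w\mid v}$ from $p\,w(\beta)=e_{w\mid v}a_i$ with $p\nmid a_i$, then invoking $\sum_{w\mid v}e_{w\mid v}f_{w\mid v}=[L:K]\leq p$) is precisely the standard short proof one finds there. Your closing observation that the irreducibility of $x^p-\alpha$ need not be invoked separately, since $p\mid e_{w\mid v}$ already forces $[L:K]=p$, is a nice streamlining; the only thing worth making explicit is that $e_{w\mid v}\geq 1$ together with $p\mid e_{w\mid v}$ gives $e_{w\mid v}\geq p$, which is exactly what that shortcut uses.
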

\begin{proof}
The short proof can be found in \cite[p. 132]{MV}.
\end{proof}

\begin{proof}[Proof of Corollary \ref{cor:weird}]
In order to prove that $F$ has property (P), thanks to Corollary \ref{cor:tower} and \eqref{eq:primedegree}, we are left to prove that the valuations (or equivalently: prime ideals) above $p_1,\ldots,p_n$ are totally ramified in $K_n/K_{n-1}$. We prove this by induction, where  the induction base is the well-known statement that $p_1$ is totally ramified in $\mathbb{Q}(\sqrt[p_1]{p_1})$. Assume that the claim is correct for some $n\geq 1$. Let $i\in\{1,\ldots,n+1\}$ be arbitrary, and let $\mathfrak{p}$ be any prime ideal in $\mathcal{O}_{K_{i-1}}$ above $p_i$. Since $p_i$ is unramified in $K_{i-1}$, we have $e_{\mathfrak{p}\mid p_i}=1$.  Moreover, by our induction hypotheses all prime ideals above $\mathfrak{p}$ are totally ramified in $K_n$. Let $\mathfrak{P}$ be the unique prime ideal in $\mathcal{O}_{K_n}$ above $\mathfrak{p}$. Then $e_{\mathfrak{P}\mid \mathfrak{p}} = [K_n:K_{i-1}]=p_i\cdots p_n$. 
Combining this with $e_{\mathfrak{p}\mid p_i}=1$, gives 
\begin{equation}\label{eq:e}
e_{\mathfrak{P}\mid p_i}=p_i\cdots p_n.
\end{equation}
Since, by \eqref{eq:alphpower}, we know $$p_n\alpha_{n-1} \mathcal{O}_{K_n} \mid (p_n \cdot p_1 p_2^{p_1}\cdots p_{n-1}^{p_1\cdots p_{n-2}})\mathcal{O}_{K_n},$$ the exponent of $\mathfrak{P}$ in $p_n\alpha_{n-1} \mathcal{O}_{K_n}$ must be a divisor of 
\[
p_1\cdots p_{i-1}\cdot e_{\mathfrak{P}\mid p_i} \overset{\eqref{eq:e}}{=} p_1\cdots p_{n}.
\]
Hence, $p_{n+1}$ is not a divisor of this exponent and by Lemma \ref{lem:radicalramification} it follows, that $\mathfrak{P}$ is totally ramified in $K_{n+1}$. This concludes the induction and applying Corollary \ref{cor:tower} yields that $F$ has property (P2), and hence property (P).

It remains to prove that the height of the elements $\alpha_n$ tends to zero as $n$ tends to infinity. 
We have $h(\alpha_1)=h(\sqrt[p_1]{p_1})=\frac{1}{p_1}\log(p_1) <1$, and 
\begin{equation}\label{eq:heightbound}
h(\alpha_{n+1})=\frac{1}{p_{n+1}}h(p_{n+1}\alpha_n)\leq \frac{1}{p_{n+1}}(\log(p_{n+1})+h(\alpha_n))
\end{equation}
for all $n \geq 1$. A trivial induction shows that $h(\alpha_n)<1$ for all $n\geq 1$, but this implies by \eqref{eq:heightbound} that $h(\alpha_n)$ indeed tends to zero as $n$ tends to infinity. 
\end{proof}

\section{Some facts about $\mathbb{Q}^{\rm sym}$}\label{sec:Qsym}

\begin{definition}
For $k\in\mathbb{N}$ we define the field $\Symm{k}$ to be the compositum of all number fields $K$, such that $K/\mathbb{Q}$ is Galois with Galois group isomorphic to some symmetric group $S_n$, with $n\leq k$. Then $\mathbb{Q}^{\rm sym}=\cup_{k\in\mathbb{N}} \Symm{k}$.
\end{definition}

\begin{remark}\label{rmk:Symmk}
It is $\Symm{k}$ generated by elements of degree bounded from above by $k!$. Hence, for every $k$, the field $\Symm{k}$ and all its finite extensions have property (P), by Example \ref{ex} (III).
\end{remark}

\begin{lemma}\label{lem:singel generator}
Let $k\geq 4$ be a rational integer and let $K/\mathbb{Q}$ be a finite Galois extension. If $\mathbb{Q}(\alpha)/\mathbb{Q}$ is Galois, with Galois group isomorphic to $S_n$, then the group $\Gal(\Symm{k}K(\alpha)/\Symm{k}K)$ is either trivial or isomorphic to the alternating group $A_n$, for $n\geq k$.
\end{lemma}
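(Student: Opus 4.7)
The plan is to identify $\Gal(\Symm{k}K(\alpha)/\Symm{k}K)$ with a normal subgroup of $S_n$ and then cut down the possibilities using (a) a canonical quadratic subfield of $\mathbb{Q}(\alpha)$ that lies in $\Symm{k}$ and (b) the classification of normal subgroups of $S_n$.

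First I would observe that $\Symm{k}K/\mathbb{Q}$ is Galois, being a compositum of Galois extensions of $\mathbb{Q}$. Together with the Galois hypothesis on $\mathbb{Q}(\alpha)/\mathbb{Q}$, this gives that $\Symm{k}K(\alpha)/\Symm{k}K$ is Galois and that the restriction map yields an isomorphism
\[
H := \Gal(\Symm{k}K(\alpha)/\Symm{k}K) \;\cong\; \Gal(\mathbb{Q}(\alpha)/\mathbb{Q}(\alpha)\cap \Symm{k}K) \;\leq\; S_n .
\]
Moreover, the intersection of two Galois extensions of $\mathbb{Q}$ is again Galois over $\mathbb{Q}$, so the subfield $\mathbb{Q}(\alpha)\cap \Symm{k}K$ is Galois over $\mathbb{Q}$, and hence $H$ is a \emph{normal} subgroup of $S_n$.

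Next I would locate a canonical subfield of $\mathbb{Q}(\alpha)$ that is forced into $\Symm{k}K$. Since $\Gal(\mathbb{Q}(\alpha)/\mathbb{Q}) = S_n$, the unique index-two subgroup $A_n$ has fixed field a quadratic extension $\mathbb{Q}(\sqrt{d})$ of $\mathbb{Q}$ (one can take $d$ to be the discriminant). Every quadratic extension of $\mathbb{Q}$ is Galois with group $S_2$, so it lies in $\Symm{2}\subseteq \Symm{k}$ because $k\geq 4 \geq 2$. Consequently $\mathbb{Q}(\sqrt{d}) \subseteq \mathbb{Q}(\alpha)\cap \Symm{k}K$, which under the Galois correspondence translates into $H\leq A_n$.

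At this point I would invoke the classification of normal subgroups of $S_n$. For $n\geq 5$, the only normal subgroups of $S_n$ contained in $A_n$ are $\{1\}$ and $A_n$, and we are done. The one remaining case compatible with the hypothesis $n\geq k\geq 4$ is $n=4$, which by the inequality $n\geq k\geq 4$ forces $k=4$; but then $\mathbb{Q}(\alpha)$ has Galois group $S_4$ over $\mathbb{Q}$, so by definition $\mathbb{Q}(\alpha)\subseteq \Symm{4}=\Symm{k}\subseteq \Symm{k}K$, and $H$ is trivial. The main obstacle is precisely this exceptional case, because $S_4$ admits the extra normal subgroup $V_4$; the point is that the hypothesis $n\geq k$ combined with $n=4$ pins $k$ to $4$ and lets us absorb all of $\mathbb{Q}(\alpha)$ into $\Symm{k}$, so that the pathological case $H=V_4$ simply cannot arise.
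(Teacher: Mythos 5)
Your proof is correct and follows essentially the same route as the paper: identify $\Gal(\Symm{k}K(\alpha)/\Symm{k}K)$ with the normal subgroup $\Gal(\mathbb{Q}(\alpha)/\mathbb{Q}(\alpha)\cap\Symm{k}K)$ of $S_n$, use the quadratic subfield $\mathbb{Q}(\alpha)^{A_n}\subseteq\Symm{k}$ to rule out $S_n$, and invoke the classification of normal subgroups of $S_n$. The only (harmless) cosmetic difference is that you deduce $H\leq A_n$ before applying the classification and spell out the $n=k=4$ case explicitly, whereas the paper disposes of $n\leq k$ first and then works with $n>4$.
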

\begin{proof}
Obviously, we have $\alpha \in \Symm{k}$, for $n\leq k$. Hence, we assume for the rest of this proof $n>k\geq 4$. It is
\[
\Gal(\Symm{k}K(\alpha)/\Symm{k}K) \cong \Gal(\mathbb{Q}(\alpha)/\mathbb{Q}(\alpha)\cap\Symm{k}K)=: H \subseteq S_n.
\]
Moreover, $\mathbb{Q}(\alpha)\cap\Symm{k}K/\mathbb{Q}$ is Galois, and hence $H$ is normal in $S_n$. Since $n> 4$, it follows $H\in \{\id, A_n, S_n\}$, and we are left to show $H\neq S_n$. 

The field $\Symm{k}$ contains all quadratic field extensions of $\mathbb{Q}$. In particular, $\Symm{k}K$ contains $\mathbb{Q}(\alpha)^{A_n}$, the field fixed by $A_n \subseteq \Gal(\mathbb{Q}(\alpha)/\mathbb{Q})$. Hence, $\mathbb{Q}(\alpha)\cap\Symm{k}K\neq \mathbb{Q}$ and $H\neq S_n$.
\end{proof}

\begin{proposition}\label{prop:several generators}
Let $K/\mathbb{Q}$ be a finite Galois extension and let $F$ be a subfield of $K\mathbb{Q}^{\rm sym}$ such that $F/\Symm{k}K$ is also a finite Galois extension, for some $k\geq 4$. Then
\[
\Gal(F/\Symm{k}K)\cong A_{n_1}\times \ldots \times A_{n_r},
\]
with $r\in\mathbb{N}_0$ and $n_1,\ldots,n_r >k$.
\end{proposition}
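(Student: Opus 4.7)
The plan is to reduce to the case where $F$ is itself a finite compositum $L = \Symm{k}K(\alpha_1,\ldots,\alpha_s)$ in which each $\alpha_i$ generates an $S_{m_i}$-Galois extension of $\mathbb{Q}$ with $m_i > k$, establish by induction on $s$ that $\Gal(L/\Symm{k}K)$ is a direct product of the corresponding alternating groups, and then deduce the general case $F \subseteq L$ from the structure of normal subgroups in a finite direct product of non-abelian simple groups.

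Since $F/\Symm{k}K$ is finite and $F \subseteq K\mathbb{Q}^{\rm sym}$, I can choose finitely many $\alpha_1,\ldots,\alpha_s \in \mathbb{Q}^{\rm sym}$ with each $\mathbb{Q}(\alpha_i)/\mathbb{Q}$ Galois of group $S_{m_i}$, so that $F \subseteq L := \Symm{k}K(\alpha_1,\ldots,\alpha_s)$; discarding $\alpha_i$ with $m_i \leq k$, I may assume $m_i > k \geq 4$, hence $m_i \geq 5$, so that each $A_{m_i}$ is non-abelian simple. Setting $L_0 = \Symm{k}K$ and $L_j = L_{j-1}(\alpha_j)$, each $L_j/\Symm{k}K$ is Galois, since each $\mathbb{Q}(\alpha_i)/\mathbb{Q}$ is. The inductive step rests on the compositum isomorphism
\[
\Gal(L_j/L_{j-1}) \cong \Gal\bigl(\Symm{k}K(\alpha_j)\,/\,\Symm{k}K(\alpha_j)\cap L_{j-1}\bigr),
\]
whose right-hand side is viewed as a subgroup of $\Gal(\Symm{k}K(\alpha_j)/\Symm{k}K)$. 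By Lemma \ref{lem:singel generator} this ambient group is $A_{m_j}$ (or trivial), and the subgroup in question is moreover \emph{normal}, because the intersection of two Galois extensions of $\Symm{k}K$ is again Galois over $\Symm{k}K$. Simplicity of $A_{m_j}$ then forces that subgroup to be either trivial (in which case $\alpha_j \in L_{j-1}$ and may be dropped) or all of $A_{m_j}$, in which case the standard linear-disjointness argument gives $\Gal(L_j/\Symm{k}K) \cong \Gal(L_{j-1}/\Symm{k}K) \times A_{m_j}$. After finitely many steps and a relabeling, $\Gal(L/\Symm{k}K) \cong A_{m_{i_1}}\times\cdots\times A_{m_{i_t}}$.

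Since $F/\Symm{k}K$ is Galois, $\Gal(L/F)$ is a normal subgroup of this product. I would then invoke the classical fact that every normal subgroup of a finite direct product of non-abelian simple groups is a subproduct, giving $\Gal(L/F) \cong \prod_{\ell \in I} A_{m_{i_\ell}}$ for some $I \subseteq \{1,\ldots,t\}$; the quotient $\Gal(F/\Symm{k}K)$ then inherits the claimed form. The main obstacle is really twofold: the inductive step needs the intersection $\Symm{k}K(\alpha_j)\cap L_{j-1}$ to behave cleanly, which depends on both normality (from Galois hypotheses) and simplicity (from $m_j \geq 5$), while the final descent from $L$ to $F$ uses the subproduct structure of normal subgroups in products of non-abelian simple groups; both parts of the argument rely crucially on the assumption $k\geq 4$, without which $A_{m_j}$ could degenerate into $A_4$ and the dichotomy "trivial or everything" would fail.
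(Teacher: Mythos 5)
Your proof is correct and follows essentially the same strategy as the paper: reduce to a finite compositum $\Symm{k}K(\alpha_1,\ldots,\alpha_s)$, use Lemma~\ref{lem:singel generator} together with normality and simplicity of $A_{m_j}$ (here $m_j\geq 5$ because $k\geq 4$) to show the compositum's Galois group over $\Symm{k}K$ is the direct product $\prod A_{m_j}$, and then invoke the subproduct structure of normal subgroups in a product of non-abelian simple groups. If anything, your final descent is phrased slightly more carefully than the paper's: you correctly treat $\Gal(L/F)$ as the normal subgroup and pass to the quotient, whereas the paper writes $\Gal(F/\Symm{k}K)\trianglelefteq A_{n_1}\times\cdots\times A_{n_{r'}}$, which literally should be a quotient rather than a subgroup --- though for a product of non-abelian simple groups both are subproducts, so the conclusion is the same.
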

\begin{proof}
By definition of $\mathbb{Q}^{\rm sym}$ there are $\alpha_1,\ldots,\alpha_{r'}$ such that $\mathbb{Q}(\alpha_i)/\mathbb{Q}$ is Galois with group $S_{n_i}$ for all $i\in\{1,\ldots,r'\}$, and such that $F\subseteq \Symm{k}K(\alpha_1,\ldots,\alpha_{r'})$.

We may assume that none of the $\alpha_1,\ldots,\alpha_{r'}$ can be omitted. This implies $n_i>k$ for all $i\in\{1,\ldots,r'\}$. By Lemma \ref{lem:singel generator} it is $\Gal(\Symm{k}K(\alpha_i)/\Symm{k}K) \cong A_{n_i}$ for all $i\in\{1,\ldots,r'\}$. 

Let $i\in\{1,\ldots ,r'-1\}$ be arbitrary. It is $$(\Symm{k}K(\alpha_1,\ldots,\alpha_{i})\cap \Symm{k}K(\alpha_{i+1}))/\Symm{k}K$$ a Galois extension, and the Galois group is a proper (since $\alpha_{i+1}$ cannot be omitted) normal subgroup of $\Gal(\Symm{k}K(\alpha_{i+1})/\Symm{k}K)\cong A_{n_{i+1}}$ -- hence it is trivial and $\Symm{k}K(\alpha_1,\ldots,\alpha_{i})\cap \Symm{k}K(\alpha_{i+1})=\Symm{k}K$. Basic Galois theory gives the result
\[
\Gal(\Symm{k}K(\alpha_1,\ldots,\alpha_{r'})/\Symm{k}K) \cong A_{n_1}\times \ldots\times A_{n_{r'}}.
\]
Therefore, $\Gal(F/\Symm{k}K) \trianglelefteq A_{n_1}\times \ldots\times A_{n_{r'}}$. By Goursat's lemma, the normal subgroup $\Gal(F/\Symm{k}K)$ must be a direct product of some of the factors of $A_{n_1}\times \ldots\times A_{n_{r'}}$. Hence, after a possible renumbering, we have $\Gal(F/\Symm{k}K)\cong A_{n_1}\times \ldots\times A_{n_{r}}$ for some $r\leq r'$.
\end{proof}

\begin{corollary}\label{cor:abelian in Symm4}
Let $K/\mathbb{Q}$ be a finite Galois extension, and let $K'\subseteq \mathbb{Q}^{\rm sym}$ be a number field such that $K'K/K$ is Galois and abelian. Then it is $K'\subseteq \Symm{4}K$.
\end{corollary}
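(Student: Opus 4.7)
The plan is to apply Proposition~\ref{prop:several generators} with $k=4$ to the compositum $F := \Symm{4}K\cdot K'$, and then exploit the fact that $A_n$ is non-abelian simple for every $n\geq 5$.

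First I would verify that $F/\Symm{4}K$ fits the hypotheses of Proposition~\ref{prop:several generators}. Set $L := KK'$, which by assumption is a finite Galois extension of $K$ with abelian Galois group, and observe $F=\Symm{4}\cdot L$. Since $K'\subseteq\mathbb{Q}^{\rm sym}$, we have $F\subseteq K\mathbb{Q}^{\rm sym}$. Moreover, $L/K$ being Galois implies, by the standard translation theorem, that $F/\Symm{4}K$ is Galois with $\Gal(F/\Symm{4}K)\cong \Gal(L/L\cap\Symm{4}K)$, which is a subgroup of the abelian group $\Gal(L/K)$. Hence $F/\Symm{4}K$ is a finite Galois extension with abelian Galois group.

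Next I would invoke Proposition~\ref{prop:several generators} (with $k=4$) to obtain
\[
\Gal(F/\Symm{4}K)\cong A_{n_1}\times\cdots\times A_{n_r}
\]
with $n_i>4$ for each $i$. Since $A_n$ is a non-abelian simple group for every $n\geq 5$, a direct product of such groups is abelian if and only if it is trivial, i.e.\ $r=0$. Combining this with the abelianness established in the previous step forces $\Gal(F/\Symm{4}K)=\{\id\}$, so $F=\Symm{4}K$, and in particular $K'\subseteq \Symm{4}K$, as desired.

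There is no real obstacle here; everything reduces cleanly to Proposition~\ref{prop:several generators} once one packages $K'$ into $F=\Symm{4}K\cdot K'$. The only point that requires a moment's care is checking that $F/\Symm{4}K$ is Galois and that its Galois group embeds into the abelian group $\Gal(KK'/K)$, which is immediate from the translation theorem for Galois extensions.
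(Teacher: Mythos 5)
Your proposal is correct and is essentially the paper's own argument: the paper likewise applies Proposition~\ref{prop:several generators} to $K'K\Symm{4}/\Symm{4}K$ and concludes that an abelian group of the form $A_{n_1}\times\cdots\times A_{n_r}$ with all $n_i>4$ must be trivial. Your write-up merely makes explicit the routine verification (via the translation theorem) that the hypotheses of the proposition are met.
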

\begin{proof}
The group $\Gal(K'K\Symm{4}/\Symm{4}K)$ is abelian. Hence, Proposition \ref{prop:several generators} implies that this group is trivial, proving the claim.
\end{proof}

\section{$\mathbb{Q}^{\rm sym}$ satisfies (P1)}\label{sec:P1}

We will prove that $\mathbb{Q}^{\rm sym}$ satisfies (P1). 
Hence, we pick an arbitrary $f\in \mathbb{Q}^{\rm sym}[x]$ of degree $d \geq 2$. 
Let $\alpha_0, \alpha_1,\ldots \in \overline{\mathbb{Q}}$ be pairwise distinct elements such that $f(\alpha_i)=\alpha_{i-1}$ for all $i\in\mathbb{N}$, and such that $\alpha_0 \in \mathbb{Q}^{\rm sym}$. We will prove that this sequence is not contained in $\mathbb{Q}^{\rm sym}$. To this end, let $K/\mathbb{Q}$ be a finite Galois extension, with $K\subset \mathbb{Q}^{\rm sym}$, $f\in K[x]$, and $\alpha_0 \in K$. Set $k=\max\{\deg(f),4\}$, and consider the field $\Symm{k}K$. Since $\Symm{k}K$ has property (P), there are only finitely many elements from $\{\alpha_i\}_{i\in\mathbb{N}_0}$ in $\Symm{k}K$. Let $n\in\mathbb{N}_0$ be maximal such that $\alpha_n \in \Symm{k}K$. We will show that $\alpha_{n+1}\notin \mathbb{Q}^{\rm sym}$. 

Assume for the sake of contradiction that $\alpha_{n+1}\in \mathbb{Q}^{\rm sym}$. Let $F$ be the Galois closure of $\Symm{k}K(\alpha_{n+1})$ over $\Symm{k}K$. Then $F\subseteq \mathbb{Q}^{\rm sym}$, and by Proposition \ref{prop:several generators} it is
\begin{equation}\label{eq:1}
\Gal(F/\Symm{k}K)\cong A_{n_1}\times \ldots \times A_{n_r},
\end{equation}
with $n_1,\ldots,n_r > \deg(f)$. However, the minimal polynomial of $\alpha_{n+1}$ over $\Symm{k}K$ divides $f(x) - \alpha_n$. Hence $\Gal(F/\Symm{k}K)$ is a subgroup of $S_{\deg(f)}$, contradicting \eqref{eq:1}.

It follows that $\alpha_{n+1} \notin \mathbb{Q}^{\rm sym}$, and hence the sequence $\alpha_0,\alpha_1,\ldots$ cannot be fully contained in $\mathbb{Q}^{\rm sym}$, proving the proposition.

\section{Periodic points in $\mathbb{Q}^{\rm sym}$}\label{sec:periodicsym}

Let $K$ be a number field, $f\in K[x]$ of degree $\geq 2$, and $\alpha \in \Per_n(f)$. For any $\sigma\in\Gal(\overline{\mathbb{Q}}/K)$, we have 
\[
f^{(n)}(\sigma(\alpha))=\sigma(f^{(n)}(\alpha))=\sigma(\alpha),
\]
and hence $\sigma (\alpha)\in \Per_n(f)$. Obviously, it is also $f(\alpha)\in \Per_n(f)$. It follows that the set of Galois conjugates of $\alpha$ over $K$ is contained in a finite union of periodic orbits of length $n$. Say
\begin{equation}\label{eq:subset}
\Gal(\overline{\mathbb{Q}}/K)\cdot \alpha \subseteq O_1\cup \ldots \cup O_r,
\end{equation}
where $O_i=\{f^{(k)}(\alpha_i)\vert 0\leq k \leq n-1\}$ for some $\alpha_i \in \Per_n(f)$. Denote by $K_{\alpha}$ the Galois closure of $K(\alpha)/K$. 
If $\tau \in \Gal(K_{\alpha}/K)$ maps $\alpha_i$ to some $f^{(k)}(\alpha_j)$, for some integer $k$, then $\tau(f^{(\ell)}(\alpha_i))=f^{(\ell)}(\tau(\alpha_i))\in O_j$. This means that any element in $\Gal(K_{\alpha}/K)$ permutes the orbits $O_1,\ldots,O_r$. Set 
\begin{equation}\label{eq:N}
N_{\alpha,K}=\{\sigma \in \Gal(K_{\alpha}/K) \vert \sigma(O_i)=O_i ~\forall ~i\in\{1,\ldots,r\}\}.
\end{equation}
Then $N_{\alpha,K}$ is a normal subgroup of $\Gal(K_{\alpha}/K)$, which injects into $\left(\nicefrac{\mathbb{Z}}{n\mathbb{Z}}\right)^r$. This follows precisely as  in the proof of \cite[Theorem 4.1]{MP}.

Morton and Patel \cite[Theorem 7.4]{MP} proved that in the generic situation, there is a $\sigma \in \Gal(K_{\alpha}/K)$ such that for all $\beta \in \Gal(K_{\alpha}/K)\cdot \alpha$ we have $\sigma(\beta)=f(\beta)$. In particular, this $\sigma$ is an element in $N_{\alpha,K}$ of order $n$. Hence, for an arbitrarily given $f\in K[x]$ and an $\alpha \in \Per_n(f)$, one expects that the exponent of $N_{\alpha,K}$ grows linearly in $n$. The $25$ in our hypothesis \eqref{eq:hyp} is  particularly chosen for our purposes. We will need below that the exponent of a normal abelian subgroup of $\Gal(K_{\alpha}/K)$ is greater than the exponent of $\Gal(\mathbb{Q}_{4,{\rm sym}}/\mathbb{Q})$, which is precisely $4!=24$.

\begin{remark}
Let $f(x)=x^d$ be a power map, and $\zeta\in\Per_n(f)$. Then $\zeta$ is a primitive $d^{n}-1$-st root of unity. Now, for every number field $K /\mathbb{Q}$, at least one of the elements $\zeta^d,\ldots,\zeta^{d^{[K:\mathbb{Q}]}}$ is a Galois conjugate of $\zeta$ over $K$. This means that one of the maps $f,f^{(2)},\ldots,f^{(d^{[K:\mathbb{Q}]})}$ induces a $K$-automorphism on $K(\zeta)$. In particular, for any $n\geq 25[K:\mathbb{Q}]$ and any $\alpha \in \Per_n(f)$, the exponent of $N_{\alpha,K}$ is greater than $25$.

The very same argument is also valid if $f$ is a Chebyshev polynomial. Hence, every power map and every Chebyshev polynomial satisfies hypothesis \eqref{eq:hyp}.
\end{remark}

\begin{proposition}
Let $f\in\mathbb{Q}^{\rm sym}[x]$ be of degree $\geq 2$, such that $f$ satisfies hypothesis \eqref{eq:hyp}, then $\Per(f)\cap \mathbb{Q}^{\rm sym}$ is a finite set. 
\end{proposition}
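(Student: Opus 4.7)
My approach is by contradiction. The idea is to use Corollary \ref{cor:abelian in Symm4} to trap any large normal abelian subextension of $K_\alpha/K$ living in $\mathbb{Q}^{\rm sym}$ inside $\Symm{4}\cdot L$ for a suitable auxiliary base $L$, and then exploit the fact (recorded in the remark just before the proposition) that $\Gal(\Symm{4}/\mathbb{Q})$ has exponent at most $4!=24$. This will clash with the exponent $\geq 25$ supplied by hypothesis \eqref{eq:hyp}.

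After replacing $K$ by its Galois closure over $\mathbb{Q}$ inside $\mathbb{Q}^{\rm sym}$, I would assume $K/\mathbb{Q}$ is finite Galois with $f\in K[x]$. Suppose for contradiction that $\Per(f)\cap\mathbb{Q}^{\rm sym}$ is infinite. By \eqref{eq:hyp} I can pick $\alpha\in\Per(f)\cap\mathbb{Q}^{\rm sym}$ such that $\Gal(K_\alpha/K)$ contains a normal abelian subgroup $A$ with $\exp(A)\geq 25$. Since $\mathbb{Q}^{\rm sym}/\mathbb{Q}$ is Galois and $K,\alpha\subseteq\mathbb{Q}^{\rm sym}$, every $K$-conjugate of $\alpha$ lies in $\mathbb{Q}^{\rm sym}$, so $K_\alpha\subseteq\mathbb{Q}^{\rm sym}$. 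I then set $L:=K_\alpha^{A}$; normality of $A$ makes $L/K$ Galois, and combined with $K/\mathbb{Q}$ Galois this yields $L/\mathbb{Q}$ Galois, while $K_\alpha/L$ is Galois abelian with group $A$. Applying Corollary \ref{cor:abelian in Symm4} with $L$ in place of the base field and $K_\alpha$ in place of $K'$ (all hypotheses are met: $L/\mathbb{Q}$ finite Galois, $K_\alpha\subseteq\mathbb{Q}^{\rm sym}$ a number field, and $K_\alpha\cdot L=K_\alpha$ finite Galois abelian over $L$), I conclude $K_\alpha\subseteq\Symm{4}\cdot L$. Hence $A\cong\Gal(K_\alpha/L)$ is a quotient of $\Gal(\Symm{4}L/L)$, and restriction embeds $\Gal(\Symm{4}L/L)$ into $\Gal(\Symm{4}/\mathbb{Q})$. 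Since the latter has exponent at most $24$, we obtain $\exp(A)\leq 24$, contradicting $\exp(A)\geq 25$; therefore $\Per(f)\cap\mathbb{Q}^{\rm sym}$ is finite.

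The step I expect to be most delicate is the opening reduction to $K/\mathbb{Q}$ Galois. If hypothesis \eqref{eq:hyp} is granted only for some non-Galois $K_0$, enlarging $K_0$ to its Galois closure $K$ replaces $\Gal(K_{0,\alpha}/K_0)$ by a subquotient, and a normal abelian subgroup of large exponent need not descend intact to a subquotient. I would deal with this either by reading \eqref{eq:hyp} as applying to some $K$ already Galois over $\mathbb{Q}$ (consistent with the conventions used in Section \ref{sec:P1}), or by a short auxiliary argument tracking the behaviour of the abelian subgroup under change of base; should that incur a bounded loss of exponent, strengthening the constant $25$ by a factor controlled by $[K:K_0]$ would compensate. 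Everything else is a routine unwinding of the Galois-theoretic machinery developed in Section \ref{sec:Qsym}.
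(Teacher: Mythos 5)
Your argument is, in outline, exactly the paper's proof: pick $\alpha\in\Per(f)\cap\mathbb{Q}^{\rm sym}$ whose group $\Gal(K_\alpha/K)$ has a normal abelian subgroup $N$ of exponent $\geq 25$, pass to the fixed field $K_\alpha^N$, apply Corollary \ref{cor:abelian in Symm4} to conclude $K_\alpha\subseteq\Symm{4}K_\alpha^N$, and contradict the exponent bound $4!=24$ coming from $\Gal(\Symm{4}/\mathbb{Q})$. The surrounding logic (infinitely many periodic points would force such an $\alpha$ to exist by hypothesis \eqref{eq:hyp}, and the observation that $K_\alpha\subseteq\mathbb{Q}^{\rm sym}$ because $\mathbb{Q}^{\rm sym}/\mathbb{Q}$ is normal) is correct and is the same as, or slightly more careful than, what the paper writes.

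There is, however, one step where your justification is wrong: you claim that $L=K_\alpha^{A}$ is Galois over $\mathbb{Q}$ because $L/K$ and $K/\mathbb{Q}$ are Galois. Galois extensions are not transitive in towers ($\mathbb{Q}\subset\mathbb{Q}(\sqrt2)\subset\mathbb{Q}(\sqrt[4]2)$), and normality of $A$ in $\Gal(K_\alpha/K)$ gives you $L/K$ Galois but says nothing about $L/\mathbb{Q}$: for that you would need $A$ to be stable under the full action of $\Gal(\overline{\mathbb{Q}}/\mathbb{Q})$, and even $K_\alpha/\mathbb{Q}$ need not be Galois. This matters because Corollary \ref{cor:abelian in Symm4} (via Proposition \ref{prop:several generators} and Lemma \ref{lem:singel generator}, which uses that $\mathbb{Q}(\alpha)\cap\Symm{k}K$ is Galois over $\mathbb{Q}$) genuinely requires the base field to be Galois over $\mathbb{Q}$. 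You were right to sense that this hypothesis needs attention — the paper itself applies the corollary with base $K_\alpha^N$ without verifying it — but the fix you offer does not close the gap. A repair would be, e.g., to replace $L$ by its Galois closure $M$ over $\mathbb{Q}$ (still inside $\mathbb{Q}^{\rm sym}$) and control the exponent of $\Gal(K_\alpha M/M)$, or to strengthen the hypotheses under which the corollary is invoked; as written, your proof (like the paper's) leaves this point open.
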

\begin{proof}
Let $f$ be as in the statement, and let $K\subseteq \mathbb{Q}^{\rm sym}$ be a number field with $f\in K[x]$. 
Let $\alpha$ be an element from $\mathbb{Q}^{\rm sym} \cap \Per(f)$, such that $\Gal(K_{\alpha}/K)$ contains an abelian normal subgroup $N$ with exponent $\geq 25$. Denote with $K_{\alpha}^N$ the fixed field of $N$. Then $K_\alpha/K_{\alpha}^N$ and $K_\alpha^N/K$ are Galois extensions, where $\Gal(K_\alpha/K_\alpha^N)\cong N$ is abelian.

By Corollary \ref{cor:abelian in Symm4}, it is $K_\alpha\subseteq \Symm{4}K_\alpha^N$. Therefore, 
\[
\Gal(K_\alpha/K_\alpha^N)\cong N \trianglelefteq \Gal(\Symm{4}K_\alpha^N/K_\alpha^N).
\]
However, the exponent of $\Gal(\Symm{4}K_\alpha^N/K_\alpha^N)$ is $4!$, contradicting the choice of $N$. If $f$ satisfies hypothesis \eqref{eq:hyp}, then we can conclude that $f$ contains at most finitely many periodic points in $\mathbb{Q}^{\rm sym}$, proving the claim.
\end{proof}

Together with Section \ref{sec:P1} this proves Theorem \ref{thm:Qsym}.

\end{document}